\documentclass[11pt]{article}

\usepackage{comment}
\usepackage{amsmath,amssymb,amsthm,comment}
\usepackage{pstricks,pstricks-add,pst-eucl,pst-func,pst-plot}
\usepackage[colorlinks,citecolor=red,urlcolor=blue,linkcolor=blue]{hyperref}
\usepackage{mathrsfs,dsfont}
\usepackage{graphicx}

\usepackage{tikz}

\DeclareRobustCommand*{\sagelogo}{%
  \begin{tikzpicture}[line width=.2ex,line cap=round,rounded corners=.01ex,baseline=-.2ex]
    \draw(0,0) -- (.75em,0) 
      -- (.75em,.7ex) -- (.25em,.7ex) 
      -- (.25em,.75\ht\strutbox) -- (.75em,.75\ht\strutbox);
    \draw(2em,0) -- (1.6em,0)
      -- (1.3em,.75\ht\strutbox) -- (.9em,.75\ht\strutbox)
      -- (.9em,0) -- (1.3em,0)
      -- (1.6em,.75\ht\strutbox) -- (2.1em,.75\ht\strutbox)
      -- (2.1em,-\dp\strutbox) -- (1.45em,-\dp\strutbox);
    \draw(3em,0) -- (2.25em,0)
      -- (2.25em,.75\ht\strutbox) -- (2.8em,.75\ht\strutbox)
      -- (2.8em,.7ex) -- (2.35em, .7ex);
  \end{tikzpicture}%
}

\parindent=0cm
\parskip=2mm

\setlength{\topmargin}{-1.2cm} 
\setlength{\textwidth}{5.9in}
\setlength{\oddsidemargin}{0.13in} 
\setlength{\textheight}{22cm}

\newtheorem{nummer}{ }
\newtheorem{thm}[nummer]{\sc Theorem}

\newtheorem{cor}[nummer]{\sc Corollary}
\newtheorem{fct}[nummer]{\sc Fact}

\newcommand{\ie} {{\sl i.e.}}
\newcommand{\eg} {{\sl e.g.}}

\newcommand{\dritt}{\#}

\newcommand{\Q}{\mathds{Q}}

\newcommand{\Z}{\mathds{Z}}
\newcommand{\R}{\mathds{R}}

\newcommand{\THM}{\sc Theorem}

\newcommand{\nO}{\mathscr{O}}

\newcommand{\tripleAbf}{\bf rational Pythagorean $\boldsymbol{A}$-triple}
\newcommand{\tripleA}{rational Py\-tha\-go\-re\-an $A$-triple}

\newcommand{\TripleAbf}{\bf Pythagorean $\boldsymbol{A}$-triple}
\newcommand{\TripleA}{Py\-tha\-go\-re\-an $A$-triple}
\newcommand{\TriplesA}{Py\-tha\-go\-re\-an $A$-triples}

\makeatletter
\def\opargproof[#1]{\par\noindent {\bf #1 }}
 \makeatother


\begin{document}
\begin{center}
{\LARGE\bf Congruent Number Elliptic Curves}\\[1.7ex] 
{\LARGE\bf with Rank at Least Two}

\medskip

{\small Lorenz Halbeisen}\\[1.2ex] 
{\scriptsize Department of Mathematics, ETH Zentrum,
R\"amistrasse\;101, 8092 Z\"urich, Switzerland\\ lorenz.halbeisen@math.ethz.ch}\\[1.8ex]
{\small Norbert Hungerb\"uhler}\\[1.2ex] 
{\scriptsize Department of Mathematics, ETH Zentrum,
R\"amistrasse\;101, 8092 Z\"urich, Switzerland\\ norbert.hungerbuehler@math.ethz.ch}
\end{center}

\hspace{5ex}{\small{\it key-words\/}: congruent number elliptic curves, integral Pythagorean triples}

\hspace{5ex}{\small{\it 2010 Mathematics Subject 
Classification\/}: {\bf 11G05}\,\ 11D09}

\begin{abstract}\noindent
We give an infinite family of congruent number elliptic curves, 
each with rank at least two, which are related to integral solutions
of $m^2=n^2+nl+l^2$.
\end{abstract}

\section{Introduction}
Elliptic curves and their geometric and algebraic structure have been a flourishing
field of research in the past. They find prominent applications in cryptography
and played a key role in the proof of Fermat's Last Theorem. A salient feature of the algebraic
structure of an elliptic curve is its rank. Among general elliptic curves, 
congruent number curves of high rank are of particular interest (see, \eg,~\cite{HighRank}). 
More difficult than finding an individual congruent number curve
of high rank is to find infinite families of such curves. Johnstone and Spearman~\cite{JS}
constructed such a family with rank at least three which is related
to rational points on the biquadratic curve $w^2 = t^4 + 14t^2 + 4$. In the present paper,
we show an elementary construction for an infinite family of 
congruent number curves of rank at least two which are related
to the quadratic diophantine equation $m^2=n^2+nl+l^2$, and which
have three integral points with positive $y$-coordinate 
on a straight line. Incidentally, 
some members of the family exhibit surprisingly high individual rank.
We start by fixing the notions and notations used througout the text.

A positive integer $A$ is called a {\bf congruent number} if 
$A$ is the area of a right-angled triangle with three rational 
sides. So, $A$ is congruent if and only if there exists a
rational Pythagorean tripel $(a,b,c)$ (\ie, $a,b,c\in\Q$,
$a^2+b^2=c^2$, and $ab\neq 0$), such that $\frac{ab}2=A$.
The sequence of integer congruent numbers starts with
$$
5, 6, 7, 13, 14, 15, 20, 21, 22, 23, 24, 28, 29, 30, 31, 34, 37,\ldots
$$
(see, \eg, the On-Line Encyclopedia of Integer Sequences~\cite{oeisA003273}). 
For example, $A=7$ is a congruent number, 
witnessed by the rational Pythagorean triple $$\Bigl(\frac{24}{5}\,,
\frac{35}{12}\,,\frac{337}{60}\Bigr).$$

It is well-known that $A$ is a congruent number if
and only if the cubic curve $$C_A:\ y^2=x^3-A^2 x$$
has a rational point $(x_0,y_0)$ with $y_0\neq 0$.
The cubic curve $C_A$ is called a {\bf congruent number elliptic curve}
or just {\bf congruent number curve}.
This correspondence between rational points on congruent number curves and 
rational Pythagorean triples can be made explicit as follows:
Let 
$$
C(\Q):= \{(x,y,A)\in \Q\times\Q^*\times \Z^*:y^2=x^3-A^2x\},
$$
where $\Q^*:=\Q\setminus\{0\}, \Z^*:=\Z\setminus\{0\}$, and
$$
P(\Q):=\{(a,b,c,A)\in \Q^3\times\Z^*:a^2+b^2=c^2\ \textsl{and\/}\ ab=2A\}.
$$
Then, it is easy to check that
\begin{equation}\label{psi}
\begin{aligned}
\psi\ :\ \quad P(\Q)&\ \to\  C(\Q)\\
(a,b,c,A)&\ \mapsto \ \Bigl(\frac{b(b+c)}{2}\,,\,
\frac{b^2(b+c)}{2}\,,\,A\Bigr)
\end{aligned}
\end{equation}
is bijective  and
\begin{equation}\label{psi-1}
\begin{aligned}
\psi^{-1}\ :\qquad C(\Q)&\ \to\  P(\Q)\\
        (x,y,A)&\ \mapsto\  \Bigl(\frac{2x A}{y}\,,\;
        \frac{x^2-A^2}{y}\,,\;\frac{x^2+A^2}{y}\,,\,A\Bigr).
\end{aligned}
\end{equation}

For positive integers $A$, a triple $(a,b,c)$ of 
non-zero rational numbers is called a {\tripleAbf} 
if $a^2+b^2=c^2$ and $A=\big{|}\frac{ab}{2}\big{|}$.
Notice that if $(a,b,c)$ is a {\tripleA}, then $A$ 
is a congruent number and $|a|,|b|,|c|$ are the 
lengths of the sides of a right-angled triangle 
with area $A$. Notice also that we allow $a,b,c$
to be negative.

If $a,b,c$ are positive integers such that $a^2+b^2=c^2$ and $A=\frac{ab}{2}$ 
is integral, then the triple $(a,b,c)$ is a called a {\TripleAbf}. 
For any positive integers $m$ and $n$ with $m>n$, the triple
$$\bigl(\,\underset{\text{\small $a$}}{\underbrace{\,\mathstrut 2mn\,}}\,,\; 
\underset{\text{\small $b$}}{\underbrace{\mathstrut m^2-n^2}}\,,\; 
\underset{\text{\small $c$}}{\underbrace{\mathstrut m^2+n^2}}\,\bigr)$$ 
is a {\TripleA}. In this case, we obtain $A=mn(m^2-n^2)$ and
\begin{equation}\label{eq-psi}
\psi(a,b,c,A)=\bigl(
\underset{\text{\small $x$}}{\underbrace{m^2(m^2-n^2)}}\,,\;
\underset{\text{\small $y$}}{\underbrace{m^2(m^2-n^2)^2}}\,,\;A\bigr)\,.
\end{equation}
In particular, the point $(x,y)$ on $C_A$ which corresponds to the 
{\TripleA} $(a,b,c)$ is an integral point. 

Concerning the equation
$$m^2=n^2+nl+l^2\,,$$ we would like to mention the following fact 
(see Dickson~\cite[Exercises\,XXII.2, p.\,80]{Dickson} or Cox~\cite[Chapter\,1]{Cox}): 
\begin{fct}\label{fct:m2}
 Let $p_1<p_2<\ldots <p_j$ be primes, such that
  $p_i\equiv 1\mod 6$ for $1\le i\le j$, and let $$m=\prod_{i=1}^j p_i.$$
 
 Then the number of positive, integral solutions $l<n$ of
 $$m=n^2+nl+l^2$$ is $2^{j-1}$. By definition of $m$, for each 
 integral solution of $m=n^2+nl+l^2$, $n$ and $l$ are relatively prime,
 denoted $(n,l)=1$.
 
 Moreover, the number of positive, integral solutions $l<n$ of
 $$m^2=n^2+nl+l^2$$ is $\frac{3^j-1}2.$
 Among the $\frac{3^j-1}2$ integral solutions $l<n$ of
 $m^2=n^2+nl+l^2$ we find $2^{j+1}$ solutions with $(n,l)=1$. 
 In particular, if $j=1$ and $p\equiv 1\mod 6$, then
 the solution in positive integers $n<l$ of 
 $$p^2=n^2+nl+l^2$$ is unique and $(n,l)=1$.
\end{fct}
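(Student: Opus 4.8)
The plan is to read $Q(n,l):=n^2+nl+l^2$ as a norm form for the ring of Eisenstein integers $\Z[\omega]$, where $\omega=e^{2\pi i/3}$ satisfies $\omega^2+\omega+1=0$. Since the norm of $a+b\omega$ is $a^2-ab+b^2$, the substitution $(a,b)=(n,-l)$ gives $N(n-l\omega)=n^2+nl+l^2$, so integral solutions of $Q(n,l)=M$ correspond bijectively to elements of $\Z[\omega]$ of norm $M$. I would then exploit that $\Z[\omega]$ is a principal ideal domain with exactly six units $\pm1,\pm\omega,\pm\omega^2$, and that a rational prime $p\neq 3$ splits in $\Z[\omega]$ precisely when $p\equiv 1\pmod 3$; for odd $p$ this is the same as $p\equiv 1\pmod 6$. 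Hence each $p_i$ factors as $p_i=\pi_i\bar\pi_i$ with $\pi_i,\bar\pi_i$ non-associate primes of norm $p_i$, and everything reduces to counting elements of a prescribed norm via unique factorization.

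The counting itself is then routine. An element of norm $m=\prod_{i=1}^j p_i$ has the shape $u\prod_i\sigma_i$ with a unit $u$ and $\sigma_i\in\{\pi_i,\bar\pi_i\}$, giving $6\cdot 2^j$ elements, hence $6\cdot 2^j$ integral solutions of $Q(n,l)=m$. For the norm $m^2=\prod_i p_i^2$ the exponents of $\pi_i$ and $\bar\pi_i$ must sum to $2$, so each prime contributes the three choices $\{(2,0),(1,1),(0,2)\}$, yielding $6\cdot 3^j$ integral solutions of $Q(n,l)=m^2$.

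To pass from all integral solutions to the positive ones with $l<n$ I would use the automorphism group $G$ of the form $Q$, of order $12$, generated (in the coordinates $(n,l)$) by the rotation $(n,l)\mapsto(l,-n-l)$, the sign change $(n,l)\mapsto(-n,-l)$, and the conjugation $(n,l)\mapsto(n+l,-l)$. The six reflections in $G$ have the distinct mirror lines $l=0$, $n=0$, $l=n$, $l=-n$, $l=-2n$, $n=-2l$, so $G$ is dihedral and acts simply transitively on the $12$ sectors these lines cut out, of which the open region $\{0<l<n\}$ is exactly one. For $M=m$ no solution lies on a mirror line, since $m$ is squarefree (hence not a square, ruling out $l=0$ or $n=0$) and $3\nmid m$ (ruling out $n=\pm l$ and $l=-2n$, $n=-2l$); thus $G$ acts freely and the $6\cdot 2^j$ solutions split into $6\cdot2^j/12=2^{j-1}$ orbits, each meeting $\{0<l<n\}$ once. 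For $M=m^2$ the only solutions on mirror lines are the six degenerate ones $(\pm m,0),(0,\pm m),(m,-m),(-m,m)$, which form a single $G$-orbit of size $6$; removing it leaves $6(3^j-1)$ solutions in general position, i.e. $(3^j-1)/2$ orbits, each with a unique representative in $\{0<l<n\}$.

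Finally, for the coprimality count I would note that $\gcd(n,l)>1$ forces a rational prime to divide the Eisenstein integer $n-l\omega$, i.e. some $p_i$ divides it, which in the norm-$m^2$ factorization happens exactly when the exponent pattern at $p_i$ is $(1,1)$. The primitive elements are therefore those with pattern in $\{(2,0),(0,2)\}$ at every prime, giving $6\cdot 2^j$ of them; none of the degenerate solutions is primitive (as $\gcd(\pm m,0)=m>1$), so they fall into $2^{j-1}$ free orbits and contribute $2^{j-1}$ solutions with $0<l<n$ and $(n,l)=1$, consistent with the uniqueness asserted for $j=1$. The assertion $(n,l)=1$ for every solution of the unsquared equation is immediate from squarefreeness of $m$. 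I expect the genuine obstacle to be the fundamental-domain step: one must verify carefully that $\{0<l<n\}$ is a single chamber of $G$ and identify exactly which solutions sit on the walls, since a miscount of the degenerate orbit or of the chamber multiplicity would corrupt every one of the exponents.
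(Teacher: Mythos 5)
The paper gives no proof of this Fact at all---it is stated with pointers to Dickson and Cox---so there is no in-house argument to measure yours against; what you propose is essentially the classical proof underlying those references, and it is correct. The reduction via $N(n-l\omega)=n^2+nl+l^2$ to counting elements of norm $m$, resp.\ $m^2$, in $\Z[\omega]$, the counts $6\cdot 2^j$ and $6\cdot 3^j$ coming from unique factorization over the split primes $p_i=\pi_i\bar\pi_i$ (each $p_i\equiv 1\bmod 6$ splits, and $\pi_i\not\sim\bar\pi_i$), and the passage to the chamber $\{0<l<n\}$ of the order-$12$ dihedral automorphism group all check out: for $M=m$ no solution lies on a mirror line (squarefreeness rules out $l=0$, $n=0$, $l=-n$, and $3\nmid m$ rules out the remaining three walls), giving $6\cdot 2^j/12=2^{j-1}$ orbits, while for $M=m^2$ exactly the six points $(\pm m,0)$, $(0,\pm m)$, $(m,-m)$, $(-m,m)$ sit on walls and form one orbit, giving $(6\cdot 3^j-6)/12=\frac{3^j-1}{2}$ solutions in the chamber. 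One thing you should state explicitly rather than leave implicit: your (correct) count of coprime solutions with $0<l<n$ is $2^{j-1}$, whereas the Fact as printed asserts $2^{j+1}$. The printed figure cannot be right---for $j\le 3$ it exceeds the total number $\frac{3^j-1}{2}$ of solutions, and it contradicts the Fact's own $j=1$ clause, which demands a unique coprime solution. So $2^{j+1}$ is evidently a misprint for $2^{j-1}$, and your argument supplies the correct value; since this exponent is the only point where your conclusion and the statement diverge, it is worth flagging as an erratum rather than silently passing over it.
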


For a geometric representation of integral solutions of $x^2+xy+y^2=m^2$,
see Halbeisen and Hungerb\"uhler~\cite{HHAnning}.

If $m,n,l$ are positive integers such that $m^2=n^2+nl+l^2$, then, for $k:=n+l$,
each of the following three triples
$$\bigl(\,\underset{\text{\small $a_1$}}{\underbrace{\,\mathstrut 2mn\,}}\,,\; 
\underset{\text{\small $b_1$}}{\underbrace{\mathstrut m^2-n^2}}\,,\; 
\underset{\text{\small $c_1$}}{\underbrace{\mathstrut m^2+n^2}}\,\bigr)\,,$$
$$\bigl(\,\underset{\text{\small $a_2$}}{\underbrace{\,\mathstrut 2ml\,}}\,,\; 
\underset{\text{\small $b_2$}}{\underbrace{\mathstrut m^2-l^2}}\,,\; 
\underset{\text{\small $c_2$}}{\underbrace{\mathstrut m^2+l^2}}\,\bigr)\,,$$
$$\bigl(\,\underset{\text{\small $a_3$}}{\underbrace{\,\mathstrut 2mk\,}}\,,\; 
\underset{\text{\small $b_3$}}{\underbrace{\mathstrut k^2-m^2}}\,,\; 
\underset{\text{\small $c_3$}}{\underbrace{\mathstrut k^2+m^2}}\,\bigr)\,,$$ 
is a {\TripleA} for $A=mn(m^2-n^2)=ml(m^2-l^2)=km(k^2-m^2)$ (see Hungerb\"uhler~\cite{Noebi}).
In particular, with $m,n,l$ and~(\ref{eq-psi}) we obtain three distinct integral points on $C_A$.

Let us now turn back to the curve $C_A$.
It is convenient to consider the curve $C_A$ in the
projective plane $\R P^2$, where the curve is given by
$$
C_A :\ y^2z = x^3-A^2xz^2.
$$
On the points of $C_A$, one can define a commutative, binary, 
associative operation ``$+$'', where $\nO$, the neutral 
element of the operation, is the projective point $(0,1,0)$
at infinity. More formally, if $P$ and $Q$ are two points on $C_A$, 
then let $P\dritt Q$ be the third intersection point of
the line through $P$ and $Q$ with the curve $C_A$. 
If $P=Q$, the line through $P$ and $Q$ is replaced with the tangent in $P$.
Then
$P+Q$ is defined by stipulating 
$$P+Q\;:=\;\nO\dritt (P\dritt Q),$$
where for a point $R$ on $C_A$, $\nO\dritt R$ is the point reflected across the $x$-axis.
The following figure shows the congruent number curve $C_A$ for
$A=5$, together with two points $P$ and $Q$ and their sum $P+Q$.
%

\begin{center}
\psset{xunit=.6cm,yunit=.4cm,algebraic=true,dimen=middle,dotstyle=o,dotsize=5pt 0,linewidth=1.6pt,arrowsize=3pt 2,arrowinset=0.25}
\begin{pspicture*}(-7.1329967371431815,-7.588280903625012)(8.234736979744335,9.360344080789211)
\psaxes[labelFontSize=\scriptstyle,xAxis=true,yAxis=true,Dx=2.,Dy=2.,ticksize=-2pt 2pt,subticks=1,linewidth=.6pt,]{->}(0,0)(-7.1329967371431815,-7.588280903625012)(8.234736979744335,9.360344080789211)
\psplotImp[linewidth=1.2pt,linecolor=blue,stepFactor=0.1](-9.0,-9.0)(9.0,10.0){1.0*y^2+25.0*x^1-1.0*x^3}
\psplot[linewidth=.6pt]{-7.1329967371431815}{8.234736979744335}{(-43.01566031988557-1.5476943995438228*x)/-7.215284285929719}
\psline[linewidth=.6pt,linestyle=dashed,dash=4pt 4pt](-4.388182328551535,-7.588280903625012)(-4.388182328551535,9.360344080789211)
\begin{small}
\psdots[dotsize=4pt 0](5.824738905621608,7.211160924647724)
\rput[bl](5.3,7.4){$Q$}
\psdots[dotsize=4pt 0](-1.3905453803081107,5.663466525103901)
\rput[bl](-1.3,6){$P$}
\psdots[dotsize=4pt 0](-4.388182328551535,5.020466785549749)
\rput[bl](-6.,5.2){$P\dritt Q$}
\psdots[dotsize=4pt 0](-4.388182328551535,-5.020466785549749)
\rput[bl](-6.3,-5.4){$P+Q$}
\end{small}
\end{pspicture*}
\end{center}
%
More explicitly, for two points $P=(x_0,y_0)$ and $Q=(x_1,y_1)$ on
a congruent number curve $C_A$, the point $P+Q=(x_2,y_2)$ is given by
the following formulas: 
\begin{itemize}
\item If $x_0\neq x_1$, then 
$$x_2=\lambda^2-x_0-x_1,\qquad y_2=\lambda(x_0-x_2)-y_0,$$
where $$\lambda:=\frac{y_1-y_0}{x_1-x_0}.$$
\item If $P=Q$, \ie, $x_0=x_1$ and $y_0=y_1$, then 
\begin{equation*}\label{eq:2P}
x_2=\lambda^2-2x_0,\qquad y_2=3x_0\lambda-\lambda^3-y_0,
\end{equation*}
where 
\begin{equation*}\label{eq:lambda}
\lambda:=\frac{3x_0^2-A^2}{2y_0}.
\end{equation*}
Below we shall write $2*P$ instead of $P+P$. 

\item If $x_0=x_1$ and $y_0=-y_1$, then $P+Q:=\nO$. In particular,
$(0,0)+(0,0)=(A,0)+(A,0)=(-A,0)+(-A,0)=\nO$. 
\item Finally,
we define $\nO+P:=P$ and $P+\nO:=P$ for any point $P$, in particular, 
$\nO+\nO=\nO$.
\end{itemize}
With the operation~``$+$'',
$(C_A,+)$ is an abelian group with neutral element $\nO$.
Let $C_A(\Q)$ be the set of rational points on $C_A$ together
with $\nO$. It is easy to see that $\bigl(C_A(\Q),+\bigr)$. 
is a subgroup of $(C_A,+)$. Moreover, it is well known that 
the group $\bigl(C_A(\Q),+\bigr)$ is finitely generated.
One can readily check that the three points $(0,0)$ and
$(\pm A,0)$ are the only points on $C_A$ of order~$2$,
and one easily finds other points of finite order on $C_A$.
However, it is well known that if
$A$ is a congruent number and $(x_0,y_0)$ is a rational
point on $C_A$ with $y_0\neq 0$, then the order of $(x_0,y_0)$
is infinite. In particular, if there exists one {\tripleA}, then
there exist infinitely many such triples (for an elementary proof
of this result, which is based on a theorem of Fermat's, see
Halbeisen and Hungerb\"uhler~\cite{Fermat}).
Furthermore, {\sc Mordell's Theorem}
states that the group of rational points on $C_A$ is
finitely generated, and by the
{\sc Fundamental Theorem of Finitely Generated Abelian Groups}, 
the group of rational points on an elliptic curve is isomorphic
to some group of the form
$$\underset{\text{\scriptsize torsion group}}
{\underbrace{\Z/n_1\Z\times\ldots\times\Z/n_k\Z}}\times\Z^r,$$
where $n_1,\ldots,n_k$ and $r$ are positive integers. The group 
$\Z/\Z_{n_1}\times\ldots\times\Z/\Z_{n_k}$, which is generated by rational
points of finite order, is the so-called
\emph{torsion group}, and $r$ is 
called the \emph{rank\/} of the curve. Now, since $C_A$ does not
have rational points of finite order besides the points $(0,0)$ and
$(\pm A,0)$, the torsion group of $C_A$ is isomorphic to 
$\Z/2\Z\times\Z/2\Z$. 

Based on integral solutions of $m^2=n^2+nl+l^2$, we will show that there
are infinitely many congruent number curves $C_A$ with rank at least two
(for congruent number curves with rank at least three see
Johnstone and Spearman~\cite{JS}). 

\section{Rank at Least Two}

In order to ``compute'' the rank of a curve of the form
$$\Gamma: y^2=x^3+Bx,$$ according to 
Silverman and Tate\;\cite[Chapter\,III.6.]{SilvermanTate},
we first have to write down several equations of the form
\begin{eqnarray}
 b_1 M^4+b_2e^2&=&N^4\label{eqn:A}\\[1.2ex]
 \bar{b}_1\bar{M}^4+\bar{b}_2\bar{e}^2&=&\bar{N}^4, \label{eqn:barA}
\end{eqnarray}
namely one for each factorisation $B=b_1b_2$ and $-4B=\bar{b}_1\bar{b}_2$, respectively,
where $b_1$ and $\bar{b}_1$ are square-free. Then we have to decide,
how many of these equations have integral solutions:
Let $\#\alpha(\Gamma)$ be 
the number of equations of the form~(\ref{eqn:A}) for which we find 
integral solutions $M,e,N$ with $e\neq 0$, and
let $\#\alpha(\bar\Gamma)$ be 
be the corresponding number with respect to equations of the form~(\ref{eqn:barA}).
Then, if $r>0$, $$2^r=\frac{\#\alpha(\Gamma)\cdot \#\alpha(\bar\Gamma)}{4}.$$
Moreover, one can show that if $(x,y)$ is a rational point on $\Gamma$, where $y\neq 0$,
then one can write that point in the form
\begin{equation}\label{eq-x_b1}
x=\frac{b_1 M^2}{e^2},\qquad y=\frac{b_1 M N}{e^3},
\end{equation}
where $M,e,N$ is an integral solution of an equation of the form~(\ref{eqn:A}),
and vice versa. The analogous statement holds for rational points on the curve 
$\bar\Gamma: y^2=x^3-4Bx$ with respect to equations of the form~(\ref{eqn:barA}).

Now we are ready to prove

\begin{thm}\label{thm:main}
Let $m,n,l$ be pairwise relatively prime positive integers, where
$m=\prod_{i=1}^j p_i$ is a product of pairwise distinct primes
$p_i\equiv 1\mod 6$ and $m^2=n^2+nl+l^2$. Furthermore, let $k:=n+l$
and let $A:=klmn$. Then the rank of 
the curve $$C_A: y^2=x^3-A^2x$$ is at least two.
\end{thm}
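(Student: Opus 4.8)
The plan is to run the Silverman--Tate $2$-isogeny descent on $\Gamma:=C_A$ (so $B=-A^2$ and $\bar\Gamma:y^2=x^3+4A^2x$) and to show directly that the image of $\alpha$ on $\Gamma$ is as large as $16$. For $x\in\Q^*$ write $\mathrm{sf}(x)$ for its square-free part; then $\alpha\bigl((x,y)\bigr)=\mathrm{sf}(x)$ for a rational point with $y\neq0$ (this is the class $b_1$ of~(\ref{eq-x_b1})), while the three points of order two give $\alpha\bigl((0,0)\bigr)=\mathrm{sf}(-A^2)=-1$ and $\alpha\bigl((\pm A,0)\bigr)=\mathrm{sf}(\pm A)$. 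Since the rank formula reads $2^r=\#\alpha(\Gamma)\cdot\#\alpha(\bar\Gamma)/4$ and $\#\alpha(\bar\Gamma)\geq1$, once we know $\#\alpha(\Gamma)\geq16$ we get $2^r\geq4$, i.e.\ $r\geq2$. (The formula applies since, as recalled in the introduction, any point with $y\neq0$ has infinite order, so $r>0$.)

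First I would write down the three integral points attached to $m,n,l$. Feeding the three {\TriplesA} into~(\ref{eq-psi}) and simplifying with $m^2-n^2=lk$, $m^2-l^2=nk$ and $k^2-m^2=nl$ yields
\begin{equation*}
P_1=(klm^2,\,k^2l^2m^2),\qquad P_2=(knm^2,\,k^2n^2m^2),\qquad P_3=(nlk^2,\,k^2n^2l^2),
\end{equation*}
all lying on $\Gamma$ with positive $y$-coordinate; a direct computation shows they are collinear, so that $P_1+P_2+P_3=\nO$. Their descent images are $\alpha(P_1)=\mathrm{sf}(kl)$, $\alpha(P_2)=\mathrm{sf}(kn)$ and $\alpha(P_3)=\mathrm{sf}(nl)$, consistent with $\mathrm{sf}(kl)\,\mathrm{sf}(kn)\,\mathrm{sf}(nl)=\mathrm{sf}(k^2l^2n^2)=1$. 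I would then offer the four classes
\begin{equation*}
-1,\qquad \mathrm{sf}(A)=\mathrm{sf}(klmn),\qquad \mathrm{sf}(kl),\qquad \mathrm{sf}(kn)
\end{equation*}
coming from $(0,0)$, $(A,0)$, $P_1$ and $P_2$, and reduce the theorem to their independence in $\Q^*/(\Q^*)^2$.

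Checking independence is where the hypotheses enter. First note that $k,l,m,n$ are in fact pairwise coprime: $\gcd(k,n)=\gcd(k,l)=\gcd(n,l)=1$ is immediate from $k=n+l$, and $\gcd(k,m)=1$ follows from $k^2-m^2=nl$ together with $\gcd(m,nl)=1$. Hence a subproduct of $\{k,l,m,n\}$ is a perfect square only if each of its factors is. The class $-1$ is negative and $A,kl,kn$ are positive, so $-1$ is independent of the rest; and among the subproducts of $A=klmn$, $kl$, $kn$ the only ones not forced to involve the factor $m$ are $kl$, $kn$ and $kl\cdot kn\equiv ln$. Because $m=\prod_{i=1}^j p_i$ is square-free and $>1$, every relation involving $m$ (such as $A\equiv klmn$, $A\cdot kl\equiv mn$, $A\cdot kn\equiv ml$ or $A\cdot kl\cdot kn\equiv km$) is impossible, so independence of the four classes is equivalent to the statement that none of $kl$, $kn$, $ln$ is a perfect square, i.e.\ that at most one of $k,l,n$ is a perfect square.

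The main obstacle is therefore to exclude two of $k,l,n$ being squares simultaneously. Using $k=n+l$ and $m^2=n^2+nl+l^2$ (equivalently $k^2-m^2=nl$), each such coincidence forces a solution of a quartic: if $n=s^2$ and $l=t^2$ then $m^2=s^4+s^2t^2+t^4$, while if $k$ and one of $n,l$ are both squares then $m^2=u^4-u^2t^2+t^4$. I would finish by showing, via Fermat-style infinite descent, that neither $s^4+s^2t^2+t^4=z^2$ nor $u^4-u^2t^2+t^4=z^2$ has a solution with $st\neq0$ (resp.\ $ut\neq0$), so the configuration cannot occur. Granting this, the four classes are independent, whence $\#\alpha(\Gamma)\geq16$ and $r\geq2$. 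I expect the descent bookkeeping and the collinearity check to be routine, and the non-existence of nontrivial solutions to the two quartics to be the only genuinely substantial input.
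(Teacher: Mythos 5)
Your proposal follows essentially the same route as the paper's proof: the Silverman--Tate descent map $\alpha$, the square-free classes $kl$, $kn$, $nl$, $\pm klmn$, $-1$ read off from the integral points attached to $m,n,l,k$ and the $2$-torsion, and the reduction of the counting problem to the fact that at most one of $k,l,n$ can be a perfect square via the quartics $z^2=u^4\pm u^2v^2+v^4$; the only cosmetic differences are that the paper lists $14$ pairwise distinct classes (also using the catheti-swapped and negated-hypotenuse triples) where you exhibit $4$ independent generators of a subgroup of order $16$, and that it cites Euler and Mordell for the quartics where you propose an infinite descent. One small correction to your final step: $z^2=u^4-u^2t^2+t^4$ \emph{does} have the nontrivial solution $u=t=z=1$, so the non-existence you claim for $ut\neq 0$ is false as stated; you must admit this solution (as the paper does) and observe that it would force $k=1$ together with $n=1$ or $l=1$, hence $l=0$ or $n=0$, which contradicts positivity.
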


\begin{proof} Since we have at least one rational point $(x,y)$ on $C_A$
with $y\neq 0$, we know that the rank $r$ of $C_A$ is positive. So,
to show that the rank of the curve $C_A$ is at least two, it
is enough to show that $\#\alpha(C_A)\ge 9$. For this, we have
to show that there are integral solutions for~(\ref{eqn:A}) for at least~9 
distinct square-free integers $b_1$ dividing $-A^2$, or equivalently, 
we have to find at least~9 rational points on $C_A$, such that the~9 
corresponding integers $b_1$ are pairwise distinct, which we will do now.

Notice that because of~(\ref{eq-x_b1}), to compute $b_1$ from a rational point $P=(x,y)$ on $C_A$
with $x\neq 0$, it is enough to know the $x$-coordinate of $P$ and then
compute $x$~mod~${\Q^*}^2$ (\ie, we compute $x$ modulo squares of rationals).
The $x$-coordinates of the three integral points we get by~(\ref{psi}) from the three
{\TriplesA} $(a_1,b_1,c_1)$, $(a_2,b_2,c_2)$, $(a_3,b_3,c_3)$ generated by  
$m,n,l,k$, are
$$x_1=m^2(m^2-n^2)=m^2kl\,,\quad x_2=m^2(m^2-l^2)=m^2kn\,,\quad k^2(k^2-m^2)=k^2nl\,,$$ 
and modulo squares, this gives us three values for $b_1$,
namely $$b_{1,1}=kl\,,\quad b_{1,2}=kn\,,\quad b_{1,3}=nl\,.$$ 
Now, exchanging in each of the three {\TriplesA} the two catheti $a_i$ and $b_i$ (for $i=1,2,3$),
we obtain again three distinct integral points on $C_A$, whose $x$-coordinates modulo squares
give us $$b_{1,4}=mn\,,\quad b_{1,5}=ml\,,\quad b_{1,6}=mk\,.$$
Finally, if we replace each hypothenuse $c_j$ of these six {\TriplesA} with $-c_j$, we 
obtain again six distinct integral points on $C_A$, whose $x$-coordinates modulo squares
give us 
$$
\begin{array}{rclrclrcl}
  b_{1,7}&\hspace{-1.5ex}=\hspace{-1.5ex}&-kl\,,
  & b_{1,8}&\hspace{-1.5ex}=\hspace{-1.5ex}&-kn\,,
  & b_{1,9}&\hspace{-1.5ex}=\hspace{-1.5ex}&-nl\,\\[1.6ex]
  b_{1,10}&\hspace{-1.5ex}=\hspace{-1.5ex}&-mn\,,
  & b_{1,11}&\hspace{-1.5ex}=\hspace{-1.5ex}&-ml\,,& 
  b_{1,12}&\hspace{-1.5ex}=\hspace{-1.5ex}&-mk\,.
\end{array}
$$
In addition to these~$12$ integral points on $C_A$ (and the 
corresponding $b_1$'s), we have the two integral points $(\pm A,0)$ on $C_A$,
which give us $$b_{1,13}=klmn\qquad\text{and}\qquad b_{1,14}=-klmn.$$

Recall that, by assumption, $m$ is square-free and $k,l,n$ are pairwise relatively prime.
Therefore, if for some $i,j$ with $1\le i<j\le 14$, $b_{1,i}\equiv b_{1,j}$ modulo squares,
at least two of the integers $k,l,n$ are squares, say $n=u^2$, and $l=v^2$ or $k=v^2$. 
Then $$m^2=u^4+u^2v^2+v^4\quad\text{(in the case when $l=v^2$),}$$
or $$m^2=u^4-u^2v^2+v^4\quad\text{(in the case when $k=v^2$).}$$
If $l=v^2$, this implies that $u^2=1$ and $v=0$, or $u=0$ and $v^2=1$, and
if $k=v^2$, this implies that $u^2=1$ and $v=0$, $u=0$ and $v^2=1$, or $u^2=v^2=1$
(see, for example, Mordell~\cite[p.\,19{\sl f\/}]{MordellBook} or 
Euler~\cite[p.\,16]{Euler1780}).

So, at most one of the integers $k,l,n$ is a square, which implies that
$\#\alpha(C_A)\ge 14$ and this completes the proof.
\end{proof}

As an immediate consequence we get the following

\begin{cor} Let $m,n,l$ be as in {\THM}\;\ref{thm:main} and let
$q$ be a non-zero integer. Then the rank of 
the curve $C_{Aq^4}$ is at least two.
\end{cor}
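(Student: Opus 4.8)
The plan is to deduce the corollary from {\THM}~\ref{thm:main} by exhibiting an isomorphism of elliptic curves over $\Q$ between $C_A$ and $C_{Aq^4}$, and then appealing to the invariance of the rank under such isomorphisms. Since {\THM}~\ref{thm:main} already supplies rank at least two for $C_A$, nothing else would be needed.

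Concretely, I would write the target curve as $C_{Aq^4}:\ y^2=x^3-(Aq^4)^2x=x^3-A^2q^8x$ and check that the scaling
$$\varphi:\ (x,y)\ \longmapsto\ (q^4x,\,q^6y),\qquad \nO\mapsto\nO,$$
carries $C_A$ onto $C_{Aq^4}$: substituting $x=q^{-4}X$, $y=q^{-6}Y$ into $y^2=x^3-A^2x$ and clearing denominators returns exactly $Y^2=X^3-A^2q^8X$. This $\varphi$ is the admissible Weierstrass change of variables with $u=q^2$ and $r=s=t=0$; it is diagonal and linear in the coordinates, it fixes the point at infinity, and it commutes with reflection across the $x$-axis. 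Hence it sends lines to lines and is compatible with the chord-and-tangent definition of ``$+$'', so it is a group isomorphism $(C_A,+)\to(C_{Aq^4},+)$, restricting to $C_A(\Q)\cong C_{Aq^4}(\Q)$. Because $q$ is a \emph{non-zero} integer, the inverse $(x,y)\mapsto(q^{-4}x,\,q^{-6}y)$ also has rational coefficients; this is precisely where the hypothesis $q\neq 0$ enters.

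Isomorphic Mordell--Weil groups have the same free rank, so the rank of $C_{Aq^4}$ equals that of $C_A$, which is at least two. I do not expect a genuine obstacle here: the statement is immediate, and the only items needing a line of verification are that $\varphi$ is defined over $\Q$ and that it is a homomorphism, both of which are routine. If one prefers to remain inside the descent framework recalled at the beginning of this section, the same conclusion follows by transporting the fourteen points built in the proof of {\THM}~\ref{thm:main} through $\varphi$: each $x$-coordinate is multiplied by the perfect square $q^4$, so every associated $b_1$ is unchanged modulo ${\Q^*}^2$. The fourteen pairwise distinct square-free values of $b_1$ therefore recur for $C_{Aq^4}$, giving $\#\alpha(C_{Aq^4})\ge 14$ and hence rank at least two once more.
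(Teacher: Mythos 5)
Your proof is correct, and it takes a cleaner route than the paper's. The paper argues by scaling the solution itself: from $m^2=n^2+nl+l^2$ it passes to $(mq)^2=(nq)^2+nq\cdot lq+(lq)^2$, notes that the associated parameter becomes $kq\cdot lq\cdot mq\cdot nq=Aq^4$, and concludes that the rank of $C_{Aq^4}$ is at least two --- implicitly re-running the count of the fourteen classes $b_{1,i}$ for the scaled data (each $b_{1,i}$ acquires a factor $q^2$, hence is unchanged modulo ${\Q^*}^2$). Read literally as an invocation of Theorem~\ref{thm:main} for the triple $(mq,nq,lq)$, the paper's argument does not satisfy that theorem's hypotheses (the scaled integers are no longer pairwise coprime, and $mq$ need not be a product of primes $\equiv 1\bmod 6$), so it really rests on the unchanged $b_1$-classes. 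Your primary argument --- the explicit $\Q$-isomorphism $(x,y)\mapsto(q^4x,q^6y)$ from $C_A$ to $C_{Aq^4}$, i.e.\ the observation that twisting $A$ by a fourth power does not change the curve up to isomorphism, hence preserves the Mordell--Weil rank --- sidesteps this entirely and is airtight; your fallback paragraph, transporting the fourteen points and noting that each $x$-coordinate is multiplied by the square $q^4$, is exactly the paper's intended reasoning made explicit. What your approach buys is generality (it needs nothing about where $A$ came from) and rigor; what the paper's buys is staying strictly inside the descent framework it has just set up, at the cost of a slightly loose appeal to the theorem.
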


\begin{proof}
Notice that if $m,n,l$ are such that $m^2=n^2+nl+l^2$, then, 
for $mq,nq,lq$, we have $(mq)^2=(nq)^2+nq\cdot lq+(lq)^2$, 
which implies that for $\tilde A=kq\cdot lq\cdot mq\cdot nq=Aq^4$,
the rank of the curve $C_{\tilde A}$ is at least two.
\end{proof}

\section{Odds and Ends}

As a matter of fact we would like to mention that the 
three integral points on $C_A$ which correspond to an integral solution 
of $m^2=n^2+nl+l^2$ lie on a straight line.

\begin{fct}
Let $m,n,l$ be positive integers such that $m^2=n^2+nl+l^2$, let
$k=n+l$, and let $A=klmn$. Then the three integral points
\begin{eqnarray*}
\bigl(\,\underset{x_1}{\underbrace{m^2(m^2-n^2)}}\,,\;
\underset{y_1}{\underbrace{m^2(m^2-n^2)^2}}\,\bigr)\,,\\[1.4ex]
\bigl(\,\underset{x_2}{\underbrace{m^2(m^2-l^2)}}\,,\;
\underset{y_2}{\underbrace{m^2(m^2-l^2)^2}}\,\bigr)\,,\\[1.4ex]
\bigl(\,\underset{x_3}{\underbrace{k^2(k^2-m^2)}}\,,\;
\underset{y_3}{\underbrace{k^2(k^2-m^2)^2}}\,\bigr)\,,
\end{eqnarray*}
on the curve $C_A$ lie on a straight line.
\end{fct}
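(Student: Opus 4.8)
The plan is to show the three points are collinear by verifying directly that the determinant
\[
D=\det\begin{pmatrix} x_1 & y_1 & 1\\ x_2 & y_2 & 1\\ x_3 & y_3 & 1\end{pmatrix}
\]
vanishes, since $D=0$ is exactly the condition for three points of the plane to lie on a common line. (Equivalently, one could show $P_1+P_2+P_3=\nO$ in the group $(C_A,+)$, but the determinant is more direct.)

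First I would exploit the hypothesis to simplify the coordinates, which is what makes the whole computation tractable. Since $m^2=n^2+nl+l^2$ and $k=n+l$, a one-line calculation gives
\[
m^2-n^2=nl+l^2=l(n+l)=kl,\qquad m^2-l^2=n^2+nl=n(n+l)=kn,
\]
\[
k^2-m^2=(n+l)^2-(n^2+nl+l^2)=nl.
\]
Substituting these back, the three points become
\[
P_1=(klm^2,\,k^2l^2m^2),\qquad P_2=(knm^2,\,k^2n^2m^2),\qquad P_3=(k^2nl,\,k^2n^2l^2).
\]

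Next I would expand $D$ along the column of ones, obtaining
\[
D=(x_2y_3-x_3y_2)-(x_1y_3-x_3y_1)+(x_1y_2-x_2y_1).
\]
Each minor is a difference of two monomials in $k,l,m,n$; computing them and using $k=n+l$ (so that $n-k=-l$ and $l-k=-n$) one finds, after pulling out the common factor, that
\[
D=k^3nlm^2\bigl(l^3-n^3+m^2(n-l)\bigr).
\]

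The decisive step is to evaluate the bracket, where I would invoke the hypothesis one final time in the guise of the factorisation
\[
n^3-l^3=(n-l)(n^2+nl+l^2)=(n-l)m^2.
\]
Thus $l^3-n^3=-(n-l)m^2$ and the bracket collapses to $-(n-l)m^2+m^2(n-l)=0$, so $D=0$ and the points are collinear. I do not anticipate a genuine obstacle here: the work is purely bookkeeping, and the single idea that forces everything to cancel is recognising the quantity $n^2+nl+l^2=m^2$ hidden inside $n^3-l^3$.
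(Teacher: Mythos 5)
Your proof is correct and takes essentially the same route as the paper: a direct algebraic verification of collinearity using the simplifications $m^2-n^2=kl$, $m^2-l^2=kn$, $k^2-m^2=nl$, the only cosmetic difference being that you show the $3\times 3$ determinant vanishes while the paper checks that the two slopes $\lambda_{1,2}$ and $\lambda_{1,3}$ both equal $k^2$. All of your intermediate computations (the reduced coordinates, the factorisation $D=k^3nlm^2\bigl(l^3-n^3+m^2(n-l)\bigr)$, and the final cancellation via $n^3-l^3=(n-l)m^2$) check out.
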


\begin{proof}
For $i=2,3$ let $$\lambda_{1,i}:=\frac{y_i-y_1}{x_i-x_1}\,.$$ 
It is enough to show that $\lambda_{1,2}=\lambda_{1,3}$, or equivalently,
that $\lambda_{1,3}-\lambda_{1,2}=0$. Now, an easy calculation shows that
$\lambda_{1,2}=k^2$ and that $\lambda_{1,3}-k^2=\frac{0}{k(k-n)^3}=0$.
\end{proof}

As a last remark we would like to mention that with the help
of {\scriptsize \sagelogo} we found that some of the curves
which correspond to an integral solution of $m^2=n^2+nl+l^2$ have
rank~3 or higher. In fact, we found several curves of rank~$3$ or~$4$,
as well as the following curves of rank~$5$:

\begin{center}
\begin{tabular}{rcrrr}
$A=klmn$\hspace{1ex} &  \hspace{3ex}$m=\prod p_{i\mathstrut}$\hspace{3ex} & 
\hspace{5ex}$l$\hspace{1ex} & \hspace{9ex}$n$\hspace{1ex} & \hspace{3ex}$k=n+l$\\
\hline\\
%
$237\,195\,512\,400$ & 
$7\cdot 127$ & $464$ & $561$ & $1\,025$\\
$8\,813\,542\,297\,560$ & 
$7\cdot 13\cdot 37$ & $232$ & $3\,245$ & $3\,477$\\
%
$10\,280\,171\,942\,040$ & 
$37\cdot 67$ & ${741}$ & $2\,024$ & $2\,765$\\
%
$81\,096\,660\,783\,600$ & 
$37\cdot 103$ & $2\,139$ & $2\,261$ & $4\,400$\\
%
$225\,722\,120\,463\,840$ & 
$13\cdot 19\cdot 31$ & $505$ & $7\,392$ & $7\,897$\\
%
$457\,485\,316\,904\,280$ & 
$7\cdot 31\cdot 37$ & $895$ & $7\,544$ & $8\,439$\\
%
$5\,117\,352\,889\,729\,080$ & 
$67\cdot 223$ & $1\,551$ & $14\,105$ & $15\,656$\\
%
$281\,692\,457\,452\,791\,000$ & 
$79\cdot 409$ & $9\,064$ & $26\,811$ & $35\,875$\\
%
$24\,666\,188\,870\,481\,576\,600$ & 
$13\cdot 31\cdot 223$ & $46\,169$ & $57\,400$ & $103\,569$
\end{tabular}
\end{center}

It is possible that these curves might be candidates
for high rank congruent number elliptic curves (for another approach see 
Dujella, Janfada, Salami~\cite{HighRank}).
%

\providecommand{\bysame}{\leavevmode\hbox to3em{\hrulefill}\thinspace}
\providecommand{\MR}{\relax\ifhmode\unskip\space\fi MR }
\providecommand{\MRhref}[2]{%
  \href{http://www.ams.org/mathscinet-getitem?mr=#1}{#2}
}
\providecommand{\href}[2]{#2}

\end{document}